\documentclass[11pt,a4paper,english]{amsart}
\textwidth=16cm
\hoffset=-1.5cm
\textheight=23cm
\voffset=-1.5cm
\usepackage[english]{babel}
\usepackage{amsthm}
\usepackage{amsmath,color}
\usepackage{amssymb}
\usepackage{graphicx}
\usepackage{dsfont}
\usepackage[bf]{caption}
\usepackage{hyperref} 
\usepackage{parskip}

%\usepackage{anysize}
%\marginsize{2.5cm}{2.5cm}{2cm}{1.9cm}

\newtheorem{thm}{Theorem}

\newtheorem{lem}[thm]{Lemma}
\newtheorem{prop}[thm]{Proposition}
\newtheorem{rk}[thm]{Remark}

\def\RR{\mathbb{R}}
\def\ZZ{\mathbb{Z}}

\def\EE{\mathbb{E}}

\newcommand{\p} {\textnormal{\textsf{P}}}
\newcommand {\cro}[1] {\left[ {#1} \right]}
\def \ind {\hbox{ 1\hskip -3pt I}}

\newcommand {\va}[1] {\left| {#1} \right|}

\begin{document}

\title[On the survival probability in the Matheron - De Marsily model]
{On the survival probability in the Matheron - De Marsily model}
\author{Nadine Guillotin-Plantard} 
\address{Institut Camille Jordan, CNRS UMR 5208, Universit\'e de Lyon, Universit\'e Lyon 1, 43, Boulevard du 11 novembre 1918, 69622 Villeurbanne, France.}
\email{nadine.guillotin@univ-lyon1.fr}

\author{Fran\c{c}oise P\`ene}
\address{Universit\'e de Brest and IUF,
LMBA, UMR CNRS 6205, 29238 Brest cedex, France}
\email{francoise.pene@univ-brest.fr}

\subjclass[2000]{60F05; 60G52}
\keywords{Range; random walk in random scenery; local limit theorem; local time; stable process\\
This research was supported by the french ANR project MEMEMO2}

\begin{abstract}
We are interested in the behaviour of the range and of the first return time to the origin
of random walks in random scenery. As a byproduct a precise estimate of the survival probability 
in the Matheron and de Marsily model \cite{matheron_demarsily} is obtained. Our result confirms the conjectures announced in \cite{Maj, Red2}.

\end{abstract}

\maketitle

\section{Results for random walks in random scenery}

  Random walks in random scenery (RWRS) 
are simple models of processes in disordered
media with long-range correlations. They have been used in a wide
variety of models in physics to study anomalous dispersion in layered
random flows \cite{matheron_demarsily},  diffusion with random sources, 
or spin depolarization in random fields (we refer the reader
to Le Doussal's review paper \cite{ledoussal} for a discussion of these
models). 

On the mathematical side, motivated by the construction of 
new self-similar processes with stationary increments, 
Kesten and Spitzer \cite{KS} and Borodin  \cite{Borodin, Borodin1} 
introduced RWRS in dimension one and proved functional limit theorems. 
This study has been completed in many works, in particular in
\cite{bol} and \cite{DU}.
These processes are defined as follows. Let $\xi:=(\xi_y,y\in \ZZ)$ and $X:=(X_k,k\ge 1)$ 
be two independent sequences of independent
identically distributed random variables taking their values in $\ZZ$. 
The sequence $\xi$ is called the {\it random scenery}. 
The sequence $X$ is the sequence of increments of the {\it random walk}  
$(S_n, n \geq 0)$
defined by $S_0:=0$ and  $S_n:=\sum_{i=1}^{n}X_i$, for $n\ge 1$. 
The {\it random walk in random scenery} (RWRS) $Z$ is 
then defined by
$$Z_0:=0\ \mbox{and}\ \forall n\ge 1,\ Z_n:=\sum_{k=1}^{n}\xi_{S_k}.$$
Denoting by  
$N_n(y)$ the local time of the random walk $S$~:
$$N_n(y):=\#\{k=1,...,n\ :\ S_k=y\} \, ,
$$
it is straightforward to see that 
$Z_n$ can be rewritten as $Z_n=\sum_y\xi_y N_n(y)$.

As in \cite{KS}, the distribution of $\xi_0$ is assumed to belong to the normal 
domain of attraction of a strictly stable distribution 
$\mathcal{S}_{\beta}$ of 
index $\beta\in (0,2]$, with characteristic function $\phi$ given by
$$\phi(u)=e^{-|u|^\beta(A_1+iA_2 \text{sgn}(u))}\quad u\in\mathbb{R},$$
where $0<A_1<\infty$ and $|A_1^{-1}A_2|\le |\tan (\pi\beta/2)|$.
When $\beta > 1$, this implies that $\EE[\xi_0] = 0$.
When $\beta = 1$, we assume
the symmetry condition
$\sup_{ t > 0} \va{\EE\cro{\xi_0 \ind_{\{\va{\xi_0} \le t\}}}} < +\infty \, $.\\
Concerning the random walk, the distribution of $X_1$ is
assumed to belong to the normal basin of attraction of a stable
distribution ${\mathcal S}'_{\alpha}$ with index $\alpha\in (0,2]$, with characteristic function $\psi$ given by
$$\psi(u)=e^{-|u|^\alpha(C_1+iC_2 \text{sgn}(u))}\quad u\in\mathbb{R},$$
where $0<C_1<\infty$ and $|C_1^{-1}C_2|\le |\tan (\pi\alpha/2)|$. In the particular case where $\alpha =1$, we assume that $C_2=0$. Moreover we assume that the additive group $\mathbb Z$ is generated by the support of the distribution of $X_1$.

\noindent Then the following weak convergences hold in the space  of 
c\`adl\`ag real-valued functions 
defined on $[0,\infty)$ endowed with the 
Skorohod $J_1$-topology~:
$$\left(n^{-\frac{1}{\alpha}} S_{\lfloor nt\rfloor}\right)_{t\geq 0}   
\mathop{\Longrightarrow}_{n\rightarrow\infty}
^{\mathcal{L}} \left(Y(t)\right)_{t\geq 0}\, ,$$
$$   \left(n^{-\frac{1}{\beta}} 
\sum_{k=0}^{\lfloor nx\rfloor}\xi_{k}\right)_{x\ge 0}
   \mathop{\Longrightarrow}_{n\rightarrow\infty}^{\mathcal{L}} 
\left(U(x)\right)_{x\ge 0}\quad\mbox{and}\quad
\left(n^{-\frac{1}{\beta}} 
\sum_{k=\lfloor -nx\rfloor}^{1}\xi_{k}\right)_{x\ge 0}
   \mathop{\Longrightarrow}_{n\rightarrow\infty}^{\mathcal{L}} 
\left(U(-x)\right)_{x\ge 0}$$
where $(U(x))_{x\ge 0}$,  $(U(-x))_{x\ge 0}$ and $(Y(t))_{t\ge 0}$ are three independent L\'evy processes such 
that $U(0)=0$, $Y(0)=0$, 
$Y(1)$ has distribution $\mathcal{S}'_{\alpha}$, $U(1)$ and $U(-1)$ 
have distribution  $\mathcal{S}_\beta$.
We will denote by $(L_t(x))_{x\in\mathbb{R},t\geq 0}$ a continuous version with compact support of the local time of the process $(Y(t))_{t\geq 0}$. Let us define
 $$\delta := 1-\frac{1}{\alpha}+ \frac{1}{\alpha \beta}.$$
In the case $\alpha\in (1,2]$ and $\beta\in (0,2]$, Kesten and Spitzer \cite{KS} proved 
the convergence in distribution of $(n^{-\delta} Z_{[nt]})_{t\ge 0}, n\geq 1$ (with respect to the $J_1$-metric), 
to a process $\Delta=(\Delta_t)_{t\geq 0}$ defined in this case by
$$\Delta_t : = \int_{\RR} L_t(x) \, d U(x).$$
This process $\Delta$ is called Kesten-Spitzer process in the literature.

When $\alpha\in(0,1)$ (when the random walk $S$ is transient) and $\beta\in(0,2]\setminus \{1\} $,
$(n^{-\frac 1\beta} Z_{[nt]})_{t\ge 0}, n\geq 1$ converges in distribution (with respect to the $M_1$-metric), 
to $(\Delta_t:=c_0U_t)_{t\geq 0}$ for some $c_0>0$ (see \cite{FFN}).

When $\alpha=1$ and $\beta\in(0,2]\setminus \{1\} $,
$(n^{-\frac 1\beta}(\log n)^{\frac 1\beta-1} Z_{[nt]})_{t\ge 0}, n\geq 1$ converges in distribution (with respect to the $M_1$-metric), 
to $(\Delta_t:=c_1U_t)_{t\geq 0}$ for some $c_1>0$ (see \cite{FFN}).

Hence in any of the cases considered above, $(Z_{\lfloor nt\rfloor}/a_n)_{t\ge 0}$ converges in distribution (with respect to the
$M_1$-metric) to some process $\Delta$, with
$$
a_n:=\left\{ \begin{array}{lll}
n^{1- \frac{1}{\alpha} +\frac{1}{\alpha\beta}} & \text{if} & \alpha\in (1,2]\\
n^{\frac{1}{\beta}} (\log n)^{1-\frac{1}{\beta}} & \text{if} & \alpha =1\\
n^{\frac{1}{\beta}} & \text{if} & \alpha \in (0,1).
\end{array}
\right.
$$
We are interested in the asymptotic behaviour of the range ${\mathcal R}_n$ of the RWRS $Z$, i.e.
of the number of sites visited by $Z$ before time $n$: 
$${\mathcal R}_n:=\#\{Z_0,\dots, Z_{n}\}.$$

\begin{rk}\label{cvpsrange2}
Let $\alpha\in (0,2]$ and $\beta\in (0,1)$. Then the RWRS is transient (see for instance \cite{BFFN1}) and, due to an argument\footnote{We consider the ergodic dynamical system $(\Omega,\mu,T)$ given by $\Omega:=\mathbb Z^{\mathbb Z}\times
\mathbb Z^{\mathbb Z}$, $\mu:=(\mathbb P_{S_1})^{\otimes\mathbb Z}\otimes (\mathbb P_{\xi_1})^{\otimes\mathbb Z}$ and 
$T((\alpha_k)_k,(\epsilon_k)_k):=((\alpha_{k+1})_k,
  (\epsilon_{k+\alpha_0})_k)$ (see for instance \cite{KMC} for its ergodicity, p.162). We set
$f((\alpha_k)_k,(\epsilon_k)_k)=\epsilon_0$.
With these choices, $(Z_j)_{j\geq 1}$ has the same distribution under $\mathbb P$
as $(\sum_{k=1}^j f\circ T^j)_{j\geq 1}$ under $\mu$.
} by Derriennic \cite[Lemma 3.3.27]{Zeitouni}, $({\mathcal R}_n/n)_n$ converges $\mathbb P$-almost surely to $\mathbb P[Z_j\ne 0,\ \forall j\geq 1]$.
\end{rk}

For recurrent random walks in random scenery, we distinguish the 
easiest case when $\xi_1$ takes its values in $\{-1,0,1\}$. In that case, $\beta=2$, $U$ is the standard real Brownian motion,
$$
a_n=\left\{ \begin{array}{lll}
n^{1- \frac{1}{2\alpha}} & \text{if} & \alpha\in (1,2]\\
\sqrt{n \log n} & \text{if} & \alpha =1\\
\sqrt{n} & \text{if} & \alpha \in (0,1)
\end{array}
\right.
$$
and the limiting process $\Delta$ is either the Kesten-Spitzer process (case $\alpha\in (1,2])$ or the real Brownian motion (case $\alpha\in(0,1]$). Remark that in any case the limiting process
is symmetric.

Let $T_0:=\inf\{n\ge 1\ :\ Z_n=0\}$ be the first return time of the RWRS $Z$
to $0$.
\begin{prop}\label{cvpsrange3}
If $\alpha\in(0,2]$ and if $\xi_1$ takes its values
in $\{-1,0,1\}$, then
\begin{equation}\label{EEE1}
\frac{\mathcal R_n}{a_n}=\frac{\sup_{t\in[0,1]}Z_{\lfloor nt\rfloor}-\inf_{t\in[0,1]}Z_{\lfloor nt\rfloor}+1}{a_n} \stackrel{\mathcal L}{\longrightarrow} \sup_{t\in[0,1]}\Delta_t-\inf_{t\in[0,1]}\Delta_t.
\end{equation}
Moreover
\begin{equation}\label{EEE2}
\lim_{n\rightarrow+\infty} \frac{\mathbb{E}[\mathcal R_n]}{a_n}=2\, \mathbb E\left[\sup_{t\in[0,1]}\Delta_t\right]
\end{equation}
and
\begin{equation}\label{EEE3}
\lim_{n\rightarrow +\infty}\frac n{a_n}\mathbb P(T_0>n)=\max\left(2-\frac 1{\alpha}, 1\right) \mathbb E\left[\sup_{t\in[0,1]}\Delta_t\right].
\end{equation}
\end{prop}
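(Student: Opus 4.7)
The first equality in (\ref{EEE1}) is immediate from the $\pm 1$-step property of $Z$ (itself a consequence of $\xi_0 \in \{-1,0,1\}$): the set $\{Z_0,\ldots,Z_n\}$ coincides with the entire integer interval $[\min_{k\leq n}Z_k, \max_{k\leq n}Z_k]$. The weak-convergence half of (\ref{EEE1}) then follows by the continuous mapping theorem from the functional convergence $(Z_{\lfloor nt\rfloor}/a_n)_t\Rightarrow(\Delta_t)_t$ recalled above: in the regime $\beta=2$ the limit $\Delta$ is either a Brownian motion (for $\alpha\in(0,1]$) or a Kesten-Spitzer process (for $\alpha\in(1,2]$), and in all cases has continuous sample paths, so $\sup_{[0,1]}$ and $\inf_{[0,1]}$ are continuous functionals at $\Delta$ in both the $J_1$ and $M_1$ Skorohod topologies.

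For (\ref{EEE2}) I would deduce the convergence of expectations from (\ref{EEE1}) by establishing uniform integrability of $\mathcal{R}_n/a_n$. Using $\mathcal{R}_n\leq 2\sup_{k\leq n}|Z_k|+1$, it suffices to control $\sup_{k\leq n}|Z_k|/a_n$ in $L^{1+\epsilon}$ for some $\epsilon>0$; this follows from the classical estimate $\mathbb{E}[Z_n^2]=\mathrm{Var}(\xi_0)\sum_{i,j\leq n}\mathbb{P}(S_i=S_j)\asymp a_n^2$, upgraded to higher moments by conditioning on the walk (where $Z_n$ is then a sum of independent centred variables) and combined with a standard maximal inequality. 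The factor $2$ in the limit comes from symmetry of $\Delta$: since $\xi_0$ is centred and takes values in $\{-1,0,1\}$ it is symmetric, forcing $\Delta\stackrel{d}{=}-\Delta$ and hence $\mathbb{E}[\sup_{[0,1]}\Delta-\inf_{[0,1]}\Delta]=2\mathbb{E}[\sup_{[0,1]}\Delta]$.

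The main content of the proposition is (\ref{EEE3}), which I would deduce from (\ref{EEE2}) via the identity
\begin{equation*}
\mathbb{E}[\mathcal{R}_n]-1 \;=\; \sum_{k=1}^n \mathbb{P}(T_0>k)
\end{equation*}
and Karamata's Tauberian theorem. To prove the identity, note that by $Z\stackrel{d}{=}-Z$ one has $\mathbb{E}[\mathcal{R}_n]=1+2\mathbb{E}[M_n]$ with $M_n:=\max_{k\leq n}Z_k$; since $M_k-M_{k-1}\in\{0,1\}$ equals $\mathbf{1}\{Z_k>M_{k-1}\}$, telescoping yields $\mathbb{E}[M_n]=\sum_{k=1}^n\mathbb{P}(Z_k>Z_j,\,\forall j<k)$. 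The $\pm 1$-step property combined with the symmetry of $\xi$ also gives $\tfrac12\mathbb{P}(T_0>k)=\mathbb{P}(Z_j>0,\,\forall j\in[1,k])$. The identity thus reduces to the reversibility statement $\mathbb{P}(Z_k>Z_j,\,\forall j<k)=\mathbb{P}(Z_j>0,\,\forall j\in[1,k])$, which I would establish using the joint time-reversal $(S,\xi)\mapsto(\tilde S^{(k)},\tilde\xi^{(k)})$ defined by $\tilde S^{(k)}_i:=S_k-S_{k-i}$ and $\tilde\xi^{(k)}_y:=\xi_{S_k-y}$: a direct check shows $(\tilde S^{(k)},\tilde\xi^{(k)})\stackrel{d}{=}(S,\xi)$ (since $\xi$ is iid and independent of $S$), and combined with the symmetry of $\xi$ and the $\pm 1$-step property of $Z$ this transforms the ``new maximum'' event into the ``stays strictly positive'' event. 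Finally, $(\mathbb{P}(T_0>k))_k$ is monotone decreasing and $a_n$ is regularly varying of some index $\delta$ with $2\delta=\max(2-\tfrac1\alpha,1)$ in all three regimes, so Karamata's Tauberian theorem applied to the above identity and to (\ref{EEE2}) yields $n\mathbb{P}(T_0>n)\sim 2\delta\,\mathbb{E}[\sup_{[0,1]}\Delta]\,a_n$, which is exactly (\ref{EEE3}). The main technical obstacle is the reversibility identity itself: because $Z$ is not Markovian, the naive time reversal introduces a residual dependence on the boundary scenery values ($\xi_{S_k}$ before reversal, $\xi_0$ after) which must be absorbed via the symmetry of $\xi$ and estimates on the local time of $S$ at $0$.
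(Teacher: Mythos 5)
Your treatment of \eqref{EEE1} and \eqref{EEE2} is essentially the paper's: the interval structure of $\{Z_0,\dots,Z_n\}$ coming from $\xi_0\in\{-1,0,1\}$, continuity of the sup/inf functionals for the Skorohod convergence, uniform integrability of $\max_k|Z_k|/a_n$ via conditioning on the walk and $\mathbb E[Z_n^2]\asymp a_n^2$, and the factor $2$ from $\Delta\stackrel{d}{=}-\Delta$. One imprecision: conditionally on $S$, the process $(Z_j)_j$ is \emph{not} a sum of independent increments (the walk revisits sites; only the site-indexed summands $\xi_yN_n(y)$ are independent), so ``a standard maximal inequality'' does not apply off the shelf. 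The paper's Lemma \ref{LEM0} uses that the increments $(\xi_{S_k})_k$ are positively associated given $S$ and invokes the Newman--Wright/Gong maximal inequality for demimartingales. This is fixable, not fatal.

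The genuine gap is in your route to \eqref{EEE3}. You reach the identity $\mathbb E[\mathcal R_n]-1=\sum_{k=1}^n\mathbb P(T_0>k)$ through a Sparre Andersen--type duality, whose key step is $\mathbb P(Z_k>Z_j\ \forall j<k)=\mathbb P(Z_j>0\ \forall j\in[1,k])$, and as you yourself concede, your reversal does not prove it. Concretely, with $\tilde Z_i:=\sum_{m=1}^i\tilde\xi^{(k)}_{\tilde S^{(k)}_m}=\sum_{m=1}^i\xi_{S_{k-m}}$, the new-maximum event becomes $\{\tilde\xi^{(k)}_0+\tilde Z_i>0,\ i=0,\dots,k-1\}$, which involves the scenery value at the origin of the reversed system and is not the event $\{\tilde Z_i>0,\ i=1,\dots,k\}$. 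Proposing to ``absorb'' this boundary term via symmetry of $\xi$ and local-time estimates is not an argument, and an exact identity is required here since the Tauberian step is applied to the exact partial sums. (The duality is in fact true: given $S$, the law of $(\xi_{S_1},\dots,\xi_{S_k})$ depends only on the coincidence pattern of $(S_1,\dots,S_k)$, and the reversed pattern has the same law as the original because both are generated by $k-1$ i.i.d.\ increments of $S$; hence $(\xi_{S_k},\dots,\xi_{S_1})\stackrel{d}{=}(\xi_{S_1},\dots,\xi_{S_k})$. But you did not supply this, and it is the whole content of the step.)

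The paper avoids the duality entirely. Counting each visited site at its last (or first) visit gives $\mathcal R_n=\sum_{k=0}^{n}\mathbf 1\{Z_j\ne Z_k\ \forall j\in(k,n]\}$, and stationarity of the annealed increment sequence $(\xi_{S_i})_i$ yields $\mathbb P(Z_j\ne Z_k\ \forall j\in(k,n])=\mathbb P(T_0>n-k)$, hence $\mathbb E[\mathcal R_n]=1+\sum_{k=1}^n\mathbb P(T_0>k)$ with no symmetry, no $\pm1$ structure and no time reversal --- which is why the same identity also serves for Propositions \ref{cvpsrange4} and \ref{cvpsrange3bis}. From there the two arguments coincide: monotonicity of $\mathbb P(T_0>k)$ together with regular variation of $a_n$ of index $\vartheta=\max\left(1-\frac1{2\alpha},\frac12\right)$ (the paper sandwiches by hand over $\lfloor xn\rfloor\le n\le\lfloor yn\rfloor$; Karamata's monotone density theorem does the same) gives the constant $2\vartheta=\max\left(2-\frac1\alpha,1\right)$.
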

The range of RWRS in the general case $\beta\in(1,2]$ is much more
delicate. Indeed, the fact that $\mathcal R_n$  is less than $\sup_{t\in[0,1]}Z_{\lfloor nt\rfloor}-\inf_{s\in[0,1]}Z_{\lfloor ns\rfloor}+1$ will only provide an upper bound; we use a separate argument to obtain the lower bound insuring that $\mathcal R_n$ has order $a_n$.
\begin{prop}\label{cvpsrange4}
Let $\alpha\in(0,2]$ and $\beta\in (1,2]$. Then
\begin{equation}\label{EEE2a}
0<\liminf_{n\rightarrow +\infty}\frac{\mathbb E[\mathcal R_n]}{a_n}\leq \limsup_{n\rightarrow +\infty}\frac{\mathbb E[\mathcal R_n]}{a_n}<\infty
\end{equation}
and
\begin{equation}\label{EEE3a}
0<\liminf_{n\rightarrow +\infty}\frac n{a_n}\mathbb P(T_0>n)\leq \limsup_{n\rightarrow +\infty}\frac n{a_n}\mathbb P(T_0>n)<\infty
\end{equation}
\end{prop}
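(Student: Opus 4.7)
The plan is to derive both parts of Proposition~\ref{cvpsrange4} from the identity
\begin{equation*}
\mathbb{E}[\mathcal{R}_n]=\sum_{\ell=0}^{n}\mathbb{P}(T_0>\ell).
\end{equation*}
To establish it I decompose the range by last visits, $\mathcal{R}_n=\sum_{k=0}^n \mathbf{1}\{Z_j\neq Z_k,\ k<j\le n\}$, and use that for every fixed $k$ the shifted process $(Z_{k+i}-Z_k)_{i\ge 0}$ has the same law as $(Z_i)_{i\ge 0}$: the future increments $(X_{k+\ell})_{\ell\ge 1}$ are i.i.d.\ copies of $X_1$, while by translation-invariance of i.i.d.\ sequences the shifted scenery $(\xi_{y+S_k})_{y\in\mathbb{Z}}$ has the law of $\xi$ and is independent of the future walk. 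Consequently $\mathbb{P}(Z_j\neq Z_k,\ k<j\le n)=\mathbb{P}(T_0>n-k)$, and summing over $k$ yields the identity.

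Given the identity, (\ref{EEE3a}) follows from (\ref{EEE2a}) by a Karamata-type monotonicity argument: $\ell\mapsto\mathbb{P}(T_0>\ell)$ is nonincreasing and $(a_n)$ is regularly varying of positive index strictly less than $1$, so $\sum_{\ell\le n}\mathbb{P}(T_0>\ell)\asymp a_n$ forces $n\,\mathbb{P}(T_0>n)\asymp a_n$ (the upper bound is $(n+1)\mathbb{P}(T_0>n)\le\sum_{\ell\le n}\mathbb{P}(T_0>\ell)$, the lower bound uses $\sum_{n<\ell\le 2^K n}\mathbb{P}(T_0>\ell)\le (2^K-1)n\,\mathbb{P}(T_0>n)$ together with $a_{2^K n}/a_n\to 2^{K\rho}$ for $K$ large). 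The upper bound in (\ref{EEE2a}) I would obtain from $\mathcal{R}_n\le\sup_{0\le k\le n}Z_k-\inf_{0\le k\le n}Z_k+1$, combined with convergence of $(Z_{\lfloor nt\rfloor}/a_n)$ to $\Delta$ in the $M_1$-topology and a $p$-th moment control ($1<p<\beta$) of $\sup_{k\le n}|Z_k|/a_n$ providing uniform integrability; such moments are available by conditioning on $S$ and invoking the stable tails of $\xi_0$.

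For the lower bound in (\ref{EEE2a}), the technical heart, I would apply Cauchy--Schwarz to $n+1=\sum_x N_n^Z(x)\mathbf{1}\{N_n^Z(x)\ge 1\}$, where $N_n^Z(x):=\#\{0\le k\le n:Z_k=x\}$, giving $(n+1)^2\le\mathcal{R}_n\sum_x N_n^Z(x)^2$. Combined with Jensen's inequality $\mathbb{E}[1/U]\ge 1/\mathbb{E}[U]$,
\begin{equation*}
\mathbb{E}[\mathcal{R}_n]\ge\frac{(n+1)^2}{\mathbb{E}\bigl[\sum_x N_n^Z(x)^2\bigr]}=\frac{(n+1)^2}{\sum_{i,j=0}^{n}\mathbb{P}(Z_i=Z_j)}.
\end{equation*}
The same shift argument gives $\mathbb{P}(Z_i=Z_j)=\mathbb{P}(Z_{|i-j|}=0)$, reducing the question to the local-limit upper estimate $\mathbb{P}(Z_k=0)=O(a_k^{-1})$, which summed yields $O(n^2/a_n)$ and closes the inequality.

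Establishing this local estimate is the main obstacle. The natural route is Fourier inversion through the conditional characteristic function
$\mathbb{E}[e^{iuZ_k}\mid S]=\prod_y\phi(uN_k(y))$,
whose modulus is bounded by $\exp(-cA_1|u|^\beta V_k^{(\beta)})$ near $u=0$, with $V_k^{(\beta)}:=\sum_y N_k(y)^\beta$ of order $a_k^\beta$ in probability. Producing $\mathbb{P}(Z_k=0)=O(a_k^{-1})$ reduces to controlling $\mathbb{E}[(V_k^{(\beta)})^{-1/\beta}]$, which is delicate on the event that $V_k^{(\beta)}$ is atypically small and must be handled separately in the transient regime $\alpha<1$ (where $V_k^{(\beta)}\asymp k$ and a one-dimensional LLT suffices), the critical case $\alpha=1$ (with logarithmic corrections), and the recurrent regime $\alpha\in(1,2]$, where local-time estimates for the $\alpha$-stable limit $(Y(t))$ become essential.
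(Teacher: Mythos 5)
Your proposal follows essentially the same route as the paper: the last-visit identity $\mathbb{E}[\mathcal{R}_n]=\sum_{\ell=0}^n\mathbb{P}(T_0>\ell)$ combined with monotonicity of $\ell\mapsto\mathbb{P}(T_0>\ell)$ and regular variation of $a_n$ to pass from \eqref{EEE2a} to \eqref{EEE3a}, the bound $\mathcal{R}_n\le\max_{k\le n} Z_k-\min_{k\le n} Z_k+1$ with uniform integrability of $a_n^{-1}\max_{k\le n}|Z_k|$ for the upper bound, and Cauchy--Schwarz plus Jensen against the self-intersection count $\sum_{i,j}\mathbb{P}(Z_{|i-j|}=0)$ for the lower bound. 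The two inputs you leave open are precisely those the paper supplies from elsewhere: the local limit estimate $\mathbb{P}(Z_k=0)=O(a_k^{-1})$ is quoted from the earlier local limit theorems of Castell, Guillotin-Plantard, P\`ene and Schapira rather than reproved, and the moment bound on $\max_{k\le n}|Z_k|$ is obtained not by bare conditioning on $S$ (conditionally on $S$ the process $(Z_k)_k$ is not a sum of independent increments in $k$, since revisited sites repeat the same $\xi_y$) but via Gong's maximal inequality for demimartingales, using positive association of the increments, followed by von Bahr--Esseen and estimates on $\sum_y N_n(y)^{\beta'}$.
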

We actually prove that $\limsup_{n\rightarrow +\infty}\frac{\mathbb E[\mathcal R_n]}{a_n}\le \mathbb E[\sup_{t\in[0,1]}\Delta_t-\inf_{t\in[0,1]}\Delta_t]$.
The question wether $\lim_{n\rightarrow +\infty}\frac{\mathbb E[\mathcal R_n]}{a_n}=\mathbb E[\sup_{t\in[0,1]}\Delta_t-\inf_{t\in[0,1]}\Delta_t]$ or not is still open.\\
\section{Results for a two-dimensional random walk with randomly oriented layers}
We are interested in the survival probability of a particle evolving on a randomly oriented lattice 
introduced by Matheron and de Marsily in \cite{matheron_demarsily} (see also \cite{BGKPS}) to modelise fluid transport in a porous stratified medium. 
Supported by physical arguments, numerical simulations and comparison with the Fractional Brownian Motion, 
Redner \cite{Red2} and Majumdar \cite{Maj} conjectured that the survival probability asymptotically behaves as $n^{-\frac{1}{4}}$. 
In this paper we rigorously prove their conjecture.  
Let us describe more precisely the model and the results. Let us fix $p\in(0,1)$.
The (random) environment will be given by a sequence $\epsilon=(\epsilon_k)_{k\in\mathbb Z}$
of i.i.d. (independent identically distributed) centered random variables with
values in $\{\pm 1\}$ and defined on the probability space
$(\Omega,\mathcal T,\p)$. Given $\epsilon$, the position of the particle $M$ is defined as a
$\mathbb Z^2$-random walk on nearest neighbours starting
from $0$ (i.e. $\mathbb P^\epsilon(M_0=0)=1$) and with transition probabilities
$$\mathbb P^\epsilon(M_{n+1}=(x+ \epsilon_y,y)|M_n=(x,y))=p,\quad
   \mathbb P^\epsilon(M_{n+1}=(x,y\pm 1)|M_n=(x,y))=\frac{1-p}2. $$
At site $(x,y)$, the particle can either get down  (or get up) with probability $\frac{1-p}2$ or move with probability $p$ on the $y'$s horizontal line according to its orientation (to the right (resp. to the left) if $\epsilon_y=+1$ (resp. if $\epsilon_y=-1$)).
We will write $\mathbb P$ for the annealed law, that is the
integration of the quenched distribution $\mathbb P^\epsilon$ with respect to $\p$. 
In the sequel this random walk will be named MdM random walk.
This 2-dimensional random walk in random environment was first rigorously studied by mathematicians in \cite{CP}. They proved that the MdM random walk is transient under the annealed law $\mathbb P$ and under the quenched law $\mathbb P^\epsilon$ for $\p$-almost every environment $\epsilon$. It was also proved that it has speed zero.
Actually the MdM random walk is closely related to RWRS. This fact was first noticed in \cite{GPN}. More precisely its first coordinate can be viewed as a generalized RWRS, the second coordinate being a lazy random walk on $\mathbb Z$ (see Section 5 of \cite{BFFN1} for the details). Using this remark, a functional limit theorem was proved in \cite{GPN} and a local limit theorem was established in \cite{BFFN1}, more precisely there exists some constant $C$ only depending on $p$ such that for $n$ large,
$$\mathbb{P}(M_{2n}=(0,0) ) \sim C n^{-\frac 5 4}.$$
Since the random walk $M$ {\it does not }have the Markov property under the annealed law, we are not able to deduce the survival probability from the previous local limit theorem.
Let us precise that the survival probability is the probability that the particle does not visit the $y-$axis (or the line $x=0$) before time $n$ i.e. $\mathbb{P} (T_0^{(1)} >n)$ where 
$$T_0^{(1)}:=\inf\{n\ge 1\ :\ M^{(1)}_n=0\}$$
is the first return time of the first coordinate $M^{(1)}$ of $M$ to $0$.
As for RWRS the asymptotic behavior of this probability will be deduced from the range $\mathcal R_n^{(1)}$ of the first coordinate  i.e. the number of vertical lines visited by
$(M_k)_k$ up to time $n$, namely
$$\mathcal R_n^{(1)}:=\#\{x\in\mathbb Z\ :\ \exists k=0,...,n,\ \exists y\in\mathbb Z\ :\ M_k=(x,y)\}.$$
Let us recall that in \cite{GPN} the first coordinate $M^{(1)}_{\lfloor nt\rfloor}$ normalized by $n^{\frac 34}$ is shown to converge in
distribution to $K_p\Delta_t^{(0)}$, where $K_p:=\frac p{(1-p)^{\frac 14}}$ and where $\Delta^{(0)}$ is the Kesten-Spitzer process $\Delta$ with $U$ and $Y$ two independent standard Brownian motions.
%We prove that the mean number of distinct sites visited by the first
% coordinate of the Campanino  and P\'etritis random walk up to time
% $n$ behaves as $n^{3/4}$ for $n$ large. 
%More precisely we have the following result.
\begin{prop}[Survival probability of the MdM random walk]\label{cvpsrange3bis}
$(\mathcal R_n^{(1)}/n^{\frac 34})_n$ converges in distribution
to $K_p\left(\sup_{t\in[0,1]}\Delta_t^{(0)}-\inf_{t\in[0,1]}\Delta_t^{(0)}
\right).$
Moreover
\begin{equation}\label{EEE2c}
\lim_{n\rightarrow+\infty} \frac{\mathbb{E}[\mathcal R_n^{(1)}]}{n^{\frac 34}}=2K_p\, \mathbb E\left[\sup_{t\in[0,1]}\Delta^{(0)}_t\right]
\end{equation}
and
\begin{equation}\label{EEE3c}
\lim_{n\rightarrow +\infty}  n^{\frac 1 4}\mathbb P(T_0^{(1)}>n)=\frac 32K_p\, \mathbb E\left[\sup_{t\in[0,1]}\Delta^{(0)}_t\right].
\end{equation}
\end{prop}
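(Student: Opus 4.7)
The plan is to follow the three-step scheme of Proposition~\ref{cvpsrange3}, adapted to the MdM setting where the driving vertical walk is lazy with index $\alpha=2$ and the driving scenery is $(\epsilon_y)\in\{\pm 1\}$. First, since each horizontal step changes $M^{(1)}$ by $\pm1$ and each vertical step leaves it unchanged, one has $|M^{(1)}_{k+1}-M^{(1)}_k|\le 1$, so the first coordinate visits every integer between its running minimum and maximum, whence
$$\mathcal R_n^{(1)}=\sup_{0\le k\le n}M^{(1)}_k-\inf_{0\le k\le n}M^{(1)}_k+1.$$
Combining this identity with the recalled functional convergence $(M^{(1)}_{\lfloor nt\rfloor}/n^{3/4})_t\Rightarrow(K_p\Delta^{(0)}_t)_t$ from \cite{GPN} and applying the continuous mapping theorem to the supremum and infimum yields the first assertion.

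For (\ref{EEE2c}), I would upgrade the distributional convergence to convergence in $L^1$ by establishing uniform integrability of the family $(\mathcal R_n^{(1)}/n^{3/4})_n$. Since $\mathcal R_n^{(1)}\le 2\max_{k\le n}|M^{(1)}_k|+1$, it is enough to produce an $L^{1+\delta}$ bound on $\max_{k\le n}|M^{(1)}_k|/n^{3/4}$. Conditioning on the vertical component, $M^{(1)}_n$ can be written as a RWRS-type sum $\sum_y\tilde N_n(y)\,\epsilon_y$ with i.i.d.\ bounded centred scenery; Rosenthal-type inequalities together with standard moment bounds for the local time of a one-dimensional simple random walk deliver the required control.

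The heart of the matter is (\ref{EEE3c}). The strategy is to establish the identity
$$\mathbb E[\mathcal R_n^{(1)}]=1+\sum_{k=1}^n\mathbb P(T_0^{(1)}>k),$$
starting from the decomposition $\mathbb E[\mathcal R_n^{(1)}]=\sum_{k=0}^n\mathbb P\bigl(M^{(1)}_k\ne M^{(1)}_j,\ \forall j<k\bigr)$ and using time reversal: under the annealed law, the shifted reversed trajectory $(M_k-M_{k-i})_{0\le i\le k}$ has the same distribution as $(M_i)_{0\le i\le k}$, because conditionally on $\epsilon$ the reversed walk obeys the same quenched dynamics with $\epsilon$ replaced by $-\epsilon$, and the law of $(\epsilon_y)_y$ being i.i.d.\ symmetric is invariant under both spatial shifts and sign flip. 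Projecting on the first coordinate turns the event into $\{M^{(1)}_i\ne 0,\ 1\le i\le k\}$, which yields the identity. Once it is in place, monotonicity of $k\mapsto\mathbb P(T_0^{(1)}>k)$ combined with the asymptotics $\mathbb E[\mathcal R_n^{(1)}]\sim 2K_p\,\mathbb E[\sup_{t\in[0,1]}\Delta^{(0)}_t]\,n^{3/4}$ established in the previous step makes Karamata's Tauberian theorem applicable and gives $n^{1/4}\mathbb P(T_0^{(1)}>n)\to \tfrac{3}{4}\cdot 2K_p\,\mathbb E[\sup_{t\in[0,1]}\Delta^{(0)}_t]=\tfrac{3}{2}K_p\,\mathbb E[\sup_{t\in[0,1]}\Delta^{(0)}_t]$.

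The principal obstacle is the time-reversal identity: since the MdM walk is \emph{not} Markov under $\mathbb P$, one has to argue at the quenched level and then integrate out, handling carefully the simultaneous spatial shift and sign flip of the environment induced by the reversal. The uniform integrability needed for (\ref{EEE2c}) is a secondary but still non-trivial technical point.
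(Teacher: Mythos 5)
Your three-step scheme coincides with the paper's: the identity $\mathcal R_n^{(1)}=\max_{0\le k\le n}M_k^{(1)}-\min_{0\le k\le n}M_k^{(1)}+1$ together with the functional limit theorem of \cite{GPN} and continuous mapping for the convergence in distribution; uniform integrability of $(\max_{k\le n}|M_k^{(1)}|/n^{3/4})_n$ to pass to the means; and the Dvoretzky--Erd\H{o}s type identity $\mathbb E[\mathcal R_n^{(1)}]=1+\sum_{k=1}^n\mathbb P(T_0^{(1)}>k)$ combined with monotonicity of $k\mapsto\mathbb P(T_0^{(1)}>k)$ to extract the tail. (The paper performs this last extraction by an explicit two-sided sandwich of $\mathbb P(T_0^{(1)}>n)$ between difference quotients of $\mathbb E[\mathcal R_n^{(1)}]$ rather than by quoting Karamata, but that is the same monotone-density computation and gives the same factor $3/4$.)

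There is, however, one genuine gap in your uniform-integrability step. A Rosenthal/von Bahr--Esseen inequality applied conditionally on the vertical walk bounds $\mathbb E[|M_n^{(1)}|^{q}\,|\,S]$ for the \emph{terminal} value, but you need the running maximum, and conditionally on $S$ the process $(M_k^{(1)})_k$ is \emph{not} a martingale: when the walk makes a lazy step on a line $y$ it has already visited lazily, the increment equals the already-revealed $\epsilon_y$, so its conditional expectation is $\pm1$ rather than $0$, and Doob's inequality is unavailable. The paper closes exactly this hole by noting that, conditionally on $S$, the increments are centred and positively associated, so the partial sums form a demimartingale and the maximal inequality of \cite{Gong} reduces $\mathbb E[\max_{k\le n}|M_k^{(1)}|^2\,|\,S]$ to $c\,\mathbb E[|M_n^{(1)}|^2\,|\,S]\le c\,V_n$, with $\mathbb E[V_n]\sim c' n^{3/2}$; your plan needs this (or an equivalent maximal inequality) to go through. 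Two smaller points: in your time-reversal argument the environment transformation is a shift by the \emph{random} endpoint $M_k^{(2)}$ (plus the sign flip, depending on how you centre the reversed path) --- harmless since the environment is i.i.d.\ and independent of the vertical walk, but this is the detail to make precise, and the last-exit decomposition gives the same identity directly from stationarity of the annealed increments; and to obtain the factor $2$ in \eqref{EEE2c} you should invoke the symmetry of $\Delta^{(0)}$, which gives $\mathbb E[-\inf_{t\in[0,1]}\Delta_t^{(0)}]=\mathbb E[\sup_{t\in[0,1]}\Delta_t^{(0)}]$.
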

\begin{rk} In the historical model \cite{matheron_demarsily}, the probability $p$ is equal to $1/3$, and in this particular case the survival probability is similar to $ \kappa n^{-\frac{1}{4}}$ 
where 
$$\kappa =\left(\frac{3}{2^5}\right)^{1/4} \mathbb E\left[\sup_{t\in[0,1]}\Delta^{(0)}_t\right].$$
An open question is to give an estimation of the above expectation.
\end{rk}
\begin{rk}\label{cvpsrange}
It is worth noticing that the range ${\mathcal R}_n$ of the MdM random walk, i.e. the number of sites visited by $M$ before time $n$: 
${\mathcal R}_n:=\#\{M_0,\dots M_{n}\}$ is well understood.
Using \footnote{
We consider the ergodic dynamical system $(\tilde\Omega,\tilde\mu,\tilde T)$ given by  $\tilde\Omega:=\{-1,1\}^{\mathbb Z}\times
\{-1,0,1\}^{\mathbb Z}$, $\tilde\mu:=(\frac{\delta_1+\delta_{-1}}2)^{\otimes\mathbb Z}\otimes(p\delta_0+\frac{1-p}2\delta_1+\frac{1-p}2\delta_{-1})^{\otimes\mathbb Z}$
and $\tilde T((\epsilon_k)_k,(\omega_k)_k)=((\epsilon_{k+\omega_0})_k,
  (\omega_{k+1})_k)$. We also set $\tilde f((\epsilon_k)_k,(\omega_k)_k)=(\epsilon_0,0)$
if $\omega_0 = 0$, $\tilde f((\epsilon_k)_k,(\omega_k)_k)=(0,\omega_0)$
otherwise. 
We observe that $(M_j)_{j\geq 1}$ has the same distribution under $\mathbb P$
as $(\sum_{k=0}^{j-1} \tilde f\circ \tilde T^j)_{j\geq 1}$ under $\tilde \mu$.
}
again \cite[Lemma 3.3.27]{Zeitouni}, $({\mathcal R}_n/n)_n$ converges $\mathbb P$-almost surely to $\mathbb P[M_j\ne 0,\ \forall j\geq 1]$, which contradicts the result announced in \cite{LeNy}.
\end{rk}
\section{Proofs}\label{Recurrent case}
In this section we prove Propositions  \ref{cvpsrange3}, \ref{cvpsrange4} and  \ref{cvpsrange3bis}.

Observe first that
the asymptotic estimates on the tail distribution function of the first return time to the origin
\eqref{EEE3}, \eqref{EEE3a}, \eqref{EEE3c} 
are direct consequences of respective estimates \eqref{EEE2}, \eqref{EEE2a}, \eqref{EEE2c}
on the mean range.
Indeed 
\begin{eqnarray*}
\mathbb E[\mathcal R_n]&=& 1+\sum_{k=1}^n
        \mathbb P( Z_k \neq Z_{k-1}; ... ; Z_k \neq  Z_{0})\\
      &=& 1+\sum_{k=1}^n \mathbb P( Z_1 \neq 0; ... ; Z_k \neq 0)=1+\sum_{k=1}^n\mathbb P(T_0>k)
\end{eqnarray*} 
by stationarity of the increments of $Z$ under the annealed distribution.
Since $(\mathbb P(T_0>k))_k$ is non increasing,  for every $0<x<1<y$, we have
$$\frac{\mathbb E[\mathcal R_{\lfloor y n\rfloor}-\mathcal R_{n}]}{\lfloor yn\rfloor- n}  \le  \mathbb P(T_0>n)
\le \frac{\mathbb E[\mathcal R_{ n}-\mathcal R_{\lfloor x n\rfloor}]}{n-\lfloor xn\rfloor}.$$
Hence, writing $C_-:=\liminf_{n\rightarrow +\infty}\frac {\mathbb E[\mathcal R_n]}{a_n}$ and $C_+:=\limsup_{n\rightarrow +\infty}\frac {\mathbb E[\mathcal R_n]}{a_n}$, we obtain
$$ \frac {y^\vartheta C_--C_+}{y-1}\le\liminf_{n\rightarrow +\infty}\frac n{a_n}\mathbb P(T_0>n)\le
    \limsup_{n\rightarrow +\infty}\frac n{a_n}\mathbb P(T_0>n)\le
       \frac {C_+-x^\vartheta C_-}{1-x},$$
with $\vartheta:=\max\left(1-\frac 1{2\alpha},\frac 12\right)$.
This will give \eqref{EEE3}, \eqref{EEE3a}; we proceed analogously for \eqref{EEE3c}.
\\
For Propositions  \ref{cvpsrange3},  \ref{cvpsrange3bis},
we observe that 
${\mathcal R}_n= \max_{0\le k\le n}Z_k-\min_{0\le k\le n} Z_k +1$
and ${\mathcal R}_n^{(1)}= \max_{0\le k\le n}M_k^{(1)}-\min_{0\le k\le n} M_k^{(1)} +1$ whereas for 
Proposition \ref{cvpsrange4}, we only have
${\mathcal R}_n\le \max_{0\le k\le n}Z_k-\min_{0\le k\le n} Z_k +1$. Hence the convergence of the means of the range in Propositions \ref{cvpsrange3} and  \ref{cvpsrange3bis} and the upper bound for $\mathbb E[R_n]$ in Proposition \ref{cvpsrange4} will come from lemmas \ref{LEM0} and \ref{LEM0bis} below.

Let us start by the convergence in distribution.

\begin{proof}[Proof of the convergences in distribution]
Due to the convergence for the $M_1$-topology of $((a_n^{-1} Z_{\lfloor nt\rfloor})_t)_n$ to $(\Delta_t)_t$ as $n$ goes to infinity,
we know (see Section 12.3 in \cite{Whitt}) that $(a_n^{-1}(\max_{0\le k\le n}Z_k-\min_{0\le \ell\le n}Z_{\ell}))_n$ converges in distribution
to $\sup_{t\in[0,1]}\Delta_t-\inf_{s\in[0,1]}\Delta_s $
as $n$ goes to infinity.
\\
Due to \cite{GPN}, $((M^{(1)}_{\lfloor nt\rfloor}/n^{\frac 34})_t)_n$ converges in distribution to $(K_p\Delta_t^{(0)})_t$ in the Skorohod space endowed with the $J_1$-metric. 
Hence $(n^{-\frac 34}(\max_{k=0,...,n}M_k^{(1)}-\min_{\ell=0,...,n}M_\ell^{(1)}))_n$ converges in distribution
to $K_p(\sup_{t\in[0,1]}\Delta_t^{(0)}-\inf_{s\in[0,1]}\Delta_s^{(0)})$.
\end{proof} 
\begin{lem}[RWRS]\label{LEM0}
Assume $\beta>1$, then 
$$\lim_{n\rightarrow +\infty}\frac{\mathbb E\left[\max_{k=0,...,n}Z_k\right]}{a_n}=\mathbb E\left[\sup_{t\in[0,1]}\Delta_t \right].$$
\end{lem}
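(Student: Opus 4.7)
The plan is to upgrade the convergence in distribution $a_n^{-1}\max_{0\le k\le n} Z_k\Rightarrow \sup_{t\in[0,1]}\Delta_t$, which follows from the same continuous-mapping argument on the $M_1$-topology as used just above for $\max-\min$, to convergence in $L^1$. Equivalently, I need to establish uniform integrability of the family $(a_n^{-1}\max_{0\le k\le n} Z_k)_n$, and since $\beta>1$ I would fix $p\in(1,\beta)$, so that $\mathbb E[|\xi_0|^p]<\infty$, and aim at
$$\sup_{n\ge 1}\,a_n^{-p}\,\mathbb E\Bigl[\bigl(\max_{0\le k\le n}|Z_k|\bigr)^p\Bigr]<+\infty.$$

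The natural route is to condition on the walk $S$ and use that $Z_k=\sum_y\xi_y N_k(y)$ is, conditionally on $S$, a linear combination of i.i.d.\ centered scenery variables in the normal domain of attraction of a strictly $\beta$-stable law. Standard moment estimates then give, uniformly in $k\le n$,
$$\mathbb E\bigl[|Z_k|^p\,\big|\,S\bigr]\le C_p\,\Bigl(\sum_y N_k(y)^\beta\Bigr)^{p/\beta}\le C_p\,V_n^{p/\beta},\qquad V_n:=\sum_y N_n(y)^\beta,$$
while the walk-side bound $\mathbb E[V_n^{p/\beta}]\le C\,a_n^p$ is classical: for $\alpha\in(1,2]$ one combines $\sum_y N_n(y)=n$ with the scaling $\max_y N_n(y)=O(n^{1-1/\alpha})$ to get $V_n\asymp n^{1+(\beta-1)(1-1/\alpha)}=a_n^{\beta}$, and for $\alpha\le 1$ transience/marginality of $S$ yields even stronger control.

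The main obstacle is to lift this pointwise-in-$k$ estimate to a bound on $\max_{k\le n}|Z_k|$, since $(Z_k)$ is not a martingale under the annealed law: any revisit of the walk reuses an already-observed scenery value, producing a conditionally non-centered increment. My proposed workaround is a martingale approximation. Introduce
$$M_k:=\mathbb E\bigl[Z_n\,\big|\,S,\,\xi_y:y\in\{S_0,\dots,S_k\}\bigr]=\sum_y\xi_y N_n(y)\,\mathbf{1}\{y\in\{S_0,\dots,S_k\}\},$$
which \emph{is} a martingale in $k$ (new-site increments are centered; revisits leave $M_k$ unchanged), so Doob's $L^p$-inequality gives $\mathbb E[\max_k|M_k|^p\mid S]\le C_p\,\mathbb E[|Z_n|^p\mid S]\le C_p'V_n^{p/\beta}$. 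The residual $M_k-Z_k=\sum_{y\in\{S_0,\dots,S_k\}}\xi_y(N_n(y)-N_k(y))$ shares the same conditional $\beta$-stable moment structure as $Z_n$ pointwise in $k$, and controlling its maximum over $k$ is the real technical task; I would attempt it either via a reverse-time martingale decomposition (exploiting that $S$ has i.i.d.\ increments so that the time-reversed walk has the same law) or via a dyadic chaining argument based on the stationarity of the annealed increments of $Z$.

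Combining the resulting maximal $p$-moment bound of order $a_n^p$ with the convergence in distribution then provides uniform integrability and yields the convergence of means asserted by the lemma.
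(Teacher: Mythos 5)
Your overall strategy coincides with the paper's: prove uniform integrability of $a_n^{-1}\max_{k\le n}Z_k$ by bounding a $p$-th moment ($p\in(1,\beta)$) of the maximum, conditioning on the walk $S$, and controlling the resulting local-time functional $\sum_y N_n(y)^{p}$ (or $(\sum_y N_n(y)^\beta)^{p/\beta}$) by $a_n^{p}$. That part of your outline is sound and matches what the paper does (it uses the von Bahr--Esseen inequality for the conditional moment of $Z_n$ and known estimates on $V_n(\beta')=\sum_y N_n(y)^{\beta'}$, treating $\alpha>1$, $\alpha=1$ and $\alpha<1$ separately).

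However, the step you yourself flag as ``the real technical task'' --- passing from $\mathbb E[|Z_n|^p\mid S]$ to $\mathbb E[\max_{k\le n}|Z_k|^p\mid S]$ --- is precisely the crux of the lemma, and your proposal does not close it. The Doob martingale $M_k=\mathbb E[Z_n\mid S,\xi_y:y\in\{S_0,\dots,S_k\}]$ is handled by Doob's inequality, but the residual $M_k-Z_k=\sum_{y\in\{S_0,\dots,S_k\}}\xi_y\bigl(N_n(y)-N_k(y)\bigr)$ is a process of exactly the same nature as $Z$ itself (a scenery sum with walk-dependent, time-varying weights), so bounding $\max_k|M_k-Z_k|$ is not easier than the original problem; neither the reverse-time idea nor dyadic chaining is carried out, and it is not clear either would work without a new ingredient. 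The paper's resolution is a one-line structural observation that bypasses martingales entirely: conditionally on $S$, the increments $\xi_{S_1},\dots,\xi_{S_n}$ of $Z$ are centered and \emph{positively associated} (they are obtained by duplicating independent centered variables according to the visits of $S$), so $(Z_k)_k$ is a demimartingale and the maximal inequality of Gong (Theorem 2.1 of the cited paper, a $\phi$-moment extension of the Newman--Wright inequality) gives directly $\mathbb E[\max_{j\le n}|Z_j|^{\beta'}\mid S]\le c_{\beta'}\,\mathbb E[|Z_n|^{\beta'}\mid S]$. Without this (or an equivalent maximal inequality), your argument has a genuine gap.
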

\begin{lem}[First coordinate of the MdM random walk]\label{LEM0bis}
$$\lim_{n\rightarrow +\infty}\frac{\mathbb E\left[\max_{k=0,...,n}M_k^{(1)}\right]}{n^{\frac 34}}=K_p\mathbb E\left[\sup_{t\in[0,1]}\Delta_t^{(0)} \right].$$
\end{lem}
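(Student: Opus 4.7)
Since the functional convergence $(n^{-3/4}M^{(1)}_{\lfloor nt\rfloor})_{t\ge 0}\Rightarrow (K_p\Delta^{(0)}_t)_{t\ge 0}$ from \cite{GPN} takes place in the Skorohod space with a.s.\ continuous limit, the continuous mapping theorem already gives
$$n^{-3/4}\max_{0\le k\le n}M_k^{(1)} \ \xrightarrow{\mathcal L}\ K_p\sup_{t\in[0,1]}\Delta^{(0)}_t,$$
as is used in the preceding ``Proof of the convergences in distribution'' block. The plan is to upgrade this convergence in law to the announced convergence of means by proving uniform integrability of the nonnegative family $\bigl(n^{-3/4}\max_{k\le n}M_k^{(1)}\bigr)_{n\ge 1}$, for which it is enough to establish a second-moment bound
$$\mathbb E\Bigl[\max_{0\le k\le n}(M_k^{(1)})^{2}\Bigr]\ \le\ C\,n^{3/2}.$$

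The natural starting point is the RWRS representation of the first coordinate used throughout the paper. Let $\tau_1<\tau_2<\cdots$ denote the horizontal step times, $h_i:=M^{(2)}_{\tau_i}$ the heights at which they occur, and $H_n\le n$ the total number of horizontal steps up to time $n$. Since $M^{(1)}$ is constant between consecutive horizontal steps,
$$\max_{0\le k\le n}M_k^{(1)}=\max_{0\le m\le H_n}W_m,\qquad W_m:=\sum_{i=1}^m\epsilon_{h_i}=\sum_{y\in\mathbb Z}\epsilon_y\,N^{(h)}_m(y),$$
where $N^{(h)}_m(y):=\#\{i\le m:\,h_i=y\}$. The height process $(h_i)_{i\ge 1}$ is a driftless $\mathbb Z$-valued random walk with finite variance, so conditional on $(h_i)$ the process $W_m$ is a Kesten-Spitzer RWRS with $\pm 1$ scenery. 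Independence of the $\epsilon_y$'s gives $\mathbb E[W_m^2\mid (h_i)]=\sum_y N^{(h)}_m(y)^2$, and taking annealed expectation recovers $\mathbb E[W_m^2]=O(m^{3/2})$, the classical order of the expected squared self-intersection local time of a recurrent one-dimensional random walk.

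The main obstacle is to transfer this fixed-time second moment to the running maximum, since $(W_m)$ is not a martingale when heights are revisited. My plan is to condition on the height walk $(h_i)$, to order the distinct heights $y_1,y_2,\ldots$ by first appearance, and to work in the filtration $\mathcal H_j:=\sigma(\epsilon_{y_1},\dots,\epsilon_{y_j})$, in which the revealed signs form an i.i.d.\ Rademacher sequence; the resulting representation $W_m=\sum_{j\le J(m)}\epsilon_{y_j}N^{(h)}_m(y_j)$ is a Rademacher linear form whose coefficient vector grows monotonically with~$m$. Combining Doob's $L^2$-inequality along the ``new-height'' subsequence with a Khintchine-type control of the Rademacher fluctuations of $W_m$ between first visits (equivalently, invoking the maximal inequality for RWRS underlying the tightness argument in \cite{KS}) yields $\mathbb E[\max_{m\le H_n}W_m^2\mid (h_i)]\le C\sum_y N^{(h)}_{H_n}(y)^2$, whose annealed expectation is $O(n^{3/2})$. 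This delivers the required uniform integrability, and combined with the convergence in law above it completes the proof of Lemma~\ref{LEM0bis}.
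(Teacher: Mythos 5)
Your overall architecture coincides with the paper's: convergence in law of $n^{-3/4}\max_{k\le n}M_k^{(1)}$ plus uniform integrability via a second-moment bound of order $n^{3/2}$, obtained by conditioning on the vertical walk and reducing to $\mathbb E[V_n]\sim c\,n^{3/2}$. The gap sits precisely in the step you yourself flag as the main obstacle --- passing from $\mathbb E[W_{H_n}^2]=O(n^{3/2})$ to $\mathbb E[\max_{m\le H_n}W_m^2]=O(n^{3/2})$ --- and the mechanism you propose for it does not work as described. Conditionally on the heights, the sequence $(W_{m_k})_k$ taken along the first-visit times $m_k$ is \emph{not} a martingale in the filtration $\mathcal H_k$: between $m_k$ and $m_{k+1}$ the coefficients $N^{(h)}_m(y_j)$ of the already-revealed signs keep growing, so $\mathbb E[W_{m_{k+1}}\mid \mathcal H_k]=\sum_{j\le k}\epsilon_{y_j}N^{(h)}_{m_{k+1}}(y_j)\neq W_{m_k}$ in general, and Doob's inequality cannot be invoked along that subsequence. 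Likewise, for $m_k\le m<m_{k+1}$ the entire path of $W$ is $\mathcal H_k$-measurable, so its running maximum over that stretch is not an independent ``Rademacher fluctuation'' that a Khintchine-type bound can average out: Khintchine controls a single fixed-time linear form, not a maximum over times at which only already-revealed coefficients change. The parenthetical fallback to ``the maximal inequality underlying the tightness argument in \cite{KS}'' is not an inequality of the needed form $\mathbb E[\max_m W_m^2\mid h]\le C\,\mathbb E[W_{H_n}^2\mid h]$.

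The paper closes this step with a different, and correct, tool that your setup is already one line away from: conditionally on the vertical walk, the increments of $M^{(1)}$ (equivalently, the $\epsilon_{h_i}$) are centered and \emph{positively associated}, since each is a coordinate of the independent family $(\epsilon_y)_y$ and hence a nondecreasing function of it. The partial sums therefore form a demimartingale, and the Doob-type maximal inequality for demimartingales (Theorem 2.1 of \cite{Gong}, in the line of Newman--Wright) gives $\mathbb E[\max_{m}|W_m|^{2}\mid h]\le c_{2}\,\mathbb E[|W_{H_n}|^{2}\mid h]\le c_{2}\sum_y (N^{(h)}_{H_n}(y))^{2}\le c_{2}V_n$; taking expectations and using $\mathbb E[V_n]\sim c' n^{3/2}$ finishes the argument. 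If you prefer to avoid association, a fourth-moment bound on increments combined with a M\'oricz-type maximal inequality would also yield the required uniform integrability, but that is additional work which your sketch does not supply.
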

\begin{proof}[Proof of Lemma \ref{LEM0}]
As explained above, we know that $(a_n^{-1}\max_{0\le k\le n}Z_k)_n$ converges in distribution
to $\sup_{t\in[0,1]}\Delta_t $
as $n$ goes to infinity.
Now let us prove that this sequence is
uniformly integrable. 
To this end we will use the fact that, conditionally to
the walk $S$, the increments of $(Z_n)_n$ are centered and positively associated.
Let $\beta'\in(1,\beta)$ be fixed.
Due to Theorem 2.1 of \cite{Gong}, there exists some constant $c_{\beta'}>0$ such that
\begin{eqnarray*}
\mathbb E\left[\left|\max_{j=0,...,n}Z_j\right|^{{\beta'}}|S\right] &\le & \mathbb E\left[\max_{j=0,...,n} |Z_j|^{{\beta'}} |S\right]   \\
& \le& c_{\beta'} \mathbb E\left[|Z_n|^{{\beta'}}|S\right]\\
\end{eqnarray*}
so
\begin{eqnarray*}
\mathbb E\left[\left|\max_{j=0,...,n}Z_j\right|^{{\beta'}}\right] &=& \mathbb E\left[\mathbb E\left[\left|\max_{j=0,...,n}Z_j\right|^{{\beta'}}|S\right]\right]\\
& \le& c_{\beta'} \mathbb E\left[|Z_n|^{{\beta'}}\right].
  \end{eqnarray*}  
It remains now to prove that $\mathbb E[|Z_n|^{\beta'}]=O(a_n^{\beta'})$.
\\
\noindent Let us first consider the easiest case when the random scenery is square integrable that is $\beta=2$, then we take $\beta'=2$ in the above computations and observe that 
$\mathbb E\left[|Z_n|^{2}\right]=\mathbb E[\xi_0^2]\mathbb E[V_n]$, 
where $V_n$ is the number of self-intersections up to time $n$
of the random walk $S$, i.e.  
$V_n=\sum_x(N_n(x))^2=\sum_{i,j=1}^n{\mathbf 1}_{S_i=S_j}$. 
Usual computations (see Lemma 2.3 in \cite{bol}) give that
$$\mathbb E[V_n]=\sum_{i,j=1}^n\mathbb P(S_{i-j}=0) \sim c'(a_n)^2$$
and the result follows.
\\
\noindent When $\beta\in(1,2)$, let us define $V_n(\beta)$ as follows
$$V_n(\beta):=\sum_{y\in\mathbb Z}(N_n(y))^\beta.$$
Given the random walk, $Z_n$ is a sum of independent zero-mean random variables, then
from Theorem 3 in \cite{VBE}, there  exists some constant $C>0$ such that for every $n$
$$\mathbb E[|Z_n|^{\beta'} | S ]\leq C \sum_y N_n(y)^{\beta'} \mathbb E[|\xi_y|^{\beta'}] \leq C V_n(\beta').$$
From which we deduce that $\mathbb E[|Z_n|^{\beta'}] \leq C \mathbb E[ V_n(\beta')]$.

If $\alpha >1$, due to Lemma 3.3 of \cite{NadineClement}, we know that
$\mathbb E[V_n(\beta')]=O\left(a_n^{\beta'}\right)$.
If $\alpha \in (0,1]$, using H\"{o}lder's inequality, we have
$$\mathbb E[V_n(\beta')] \leq \mathbb E[R_n]^{1-\frac{\beta'}{2}} \mathbb E[V_n]^{\frac{\beta'}{2}}.$$
Now if $\alpha =1$, we know that $\mathbb E[R_n] \sim c\frac{n}{\log n}$ (see for instance Theorem 6.9, page 398 in \cite{LGR}) and $\mathbb E[V_n] \sim c n\log n$ so $\mathbb E [V_n(\beta')] = O\left(a_n^{\beta'}\right)$
with $a_n = n^{\frac{1}{\beta'}} (\log n)^{1-\frac{1}{\beta'}}$. In the case $\alpha\in (0,1)$,the random walk is transient and the expectations of $R_n$ and $V_n$ behaves as $n$, we deduce that
 $\mathbb E [V_n(\beta')] = O\left(a_n^{\beta'}\right)$
with $a_n = n^{\frac{1}{\beta'}}$.\\*
We conclude that
$$    \lim_{n\rightarrow+\infty}\mathbb E\left[\max_{j=0,...,n}\frac{Z_j}{a_n}\right]
        = \mathbb E\left[\max_{t\in[0,1]}\Delta_t \right].$$
\end{proof}
\begin{proof}[Proof of Lemma \ref{LEM0bis}]
We know that $(n^{-\frac 34}\max_{k=0,...,n}M_k^{(1)})_n$ converges in distribution
to $K_p\sup_{t\in[0,1]}\Delta_t^{(0)}$.
To conclude, it is enough to prove that this sequence is
uniformly integrable. To this end we will prove that
it is bounded in $L^2$.
\\
Recall that the second coordinate of the MdM random walk is a random walk. Let us write it $(S_n)_n$.
Observe that 
$$M_n^{(1)}:=\sum_{k=1}^n \varepsilon_{S_k}\ind_{\{S_k=S_{k-1}\}}=\sum_{y\in\mathbb Z} \varepsilon_{y}\tilde N_n(y),$$
with $\tilde N_n(y):=\#\{k=1,...,n\ :\ S_k=S_{k-1}=y\}$.
Observe that $\tilde N$ is measurable with respect to the random walk $S$ and that $0\le \tilde N_n(y)\le N_n(y)$.
\\
Conditionally to
the walk $S$, the increments of $(M^{(1)}_n)_n$ are centered and positively associated. It follows from Theorem 2.1 of \cite{Gong} that 
\begin{eqnarray*}
\mathbb E\left[\left|\max_{j=0,...,n}M^{(1)}_j\right|^2|S\right] 
& \le& c_{2} \mathbb E\left[|M^{(1)}_n|^2|S\right]\\
& \le& c_{2} \sum_{y\in\mathbb Z}(\tilde N_n(y))^2 \le c_{2} V_n,
\end{eqnarray*}
where again $V_n=\sum_{y\in\mathbb Z}(N_n(y))^2$.
Therefore
$$
\mathbb E\left[\left|\max_{j=0,...,n}M^{(1)}_j\right|^2\right] \le  c_{2} \mathbb E[V_n].$$
Again the result follows from the fact that $\mathbb E[V_n]\sim c'n^{\frac 32}$.
\end{proof}

\begin{proof}[Proof of the lower bound of Proposition \ref{cvpsrange4}]
Let $\mathcal N_n(x):=\#\{k=1,...,n\,:\,Z_k=x\}$.
Applying the Cauchy-Schwarz inequality to 
$n= \sum_x  \mathcal N_n(x) \mathbf 1_{\{\mathcal N_n(x)>0\}} $, we obtain
$$n^2\le \sum_y \mathbf 1_{\{\mathcal N_n(y)>0\}}\, \sum_x(\mathcal N_n(x))^2 =\mathcal R_n\, \mathcal V_n,$$
with $\mathcal V_n=\sum_{x}(\mathcal N_n(x))^2
         =\sum_{i,j=1}^n\mathbf 1_{\{Z_i=Z_j\}}$
the number of self-intersections of $Z$ up to time $n$
and so using Jensen's inequality,
$$\frac{ \mathbb E[\mathcal R_n]}{a_n} \ge \frac{n^2}{a_n}  \mathbb E[(\mathcal V_n)^{-1}]\ge \frac{n^2}{a_n}{\mathbb E[\mathcal V_n]^{-1}}.$$
Moreover, using the local limit theorems for the RWRS proved in \cite{BFFN1,FFN},
\begin{eqnarray*}
\mathbb E[\mathcal V_n]&=&
    n+ 2 \sum_{1\leq i <j \leq n} \mathbb P(Z_{j-i}=0)  \sim    C' \frac{n^2}{a_n}.   
\end{eqnarray*}
Hence 
    $$\liminf_{n\rightarrow +\infty} \frac{\mathbb E[\mathcal R_n]}{a_n }\ge \frac 1{C'} >0.$$
\end{proof}

\end{document}